\theoremstyle{plain}
\newtheorem{theorem}{Theorem}
\newtheorem{proposition}[theorem]{Proposition}
\theoremstyle{definition}
\newtheorem{definition}[theorem]{Definition}
\newtheorem{example}[theorem]{Example}
\theoremstyle{remark}
\newtheorem{remark}[theorem]{Remark}
\begin{document}
\baselineskip=17pt

\title[On intervals $(kn,(k+1)n)$ containing a prime for all $n>1$]
{On intervals $(kn,(k+1)n)$ containing a prime for all $n>1$}
\author{Vladimir Shevelev}
\address{Department of Mathematics \\Ben-Gurion University of the
 Negev\\Beer-Sheva 84105, Israel; e-mail: shevelev@bgu.ac.il}
 \author{Charles R. Greathouse IV}
 \address{United States; e-mail: charles.greathouse@case.edu}
 \author{Peter J. C. Moses}
 \address{United Kingdom; e-mail: mows@mopar.freeserve.co.uk}
 \subjclass{MSC 2010: 11A41. Key words and phrases: prime numbers,
 generalized Ramanujan primes}
 \begin{abstract}
 We study values of $k$ for which the interval  $(kn,(k+1)n)$ contains a prime for
every $n>1.$ We prove that the list of such integers $k$ includes
 $k=1,2,3,5,9,14, $ and no others, at least for $k\leq 50,000,000.$ For every known
 $k$ of this list, we give a good upper estimate of the smallest $N_k(m),$ such
 that, if $n\geq N_k(m),$ then the interval $(kn,(k+1)n)$ contains at least $m$ primes.
\end{abstract}
\maketitle
\section{Introduction and main results}
In 1850, P. L. Chebyshev proved the famous Bertrand postulate (1845) that
every interval $[n,2n]$ contains a prime (for a very elegant version of his proof,
 see Theorem 9.2 in \cite{10}). Other nice proofs were given by
S. Ramamujan in 1919 \cite{8} and P. Erd\H{o}s in 1932 (reproduced in \cite{4}, pp.171-173). In 2006, M. El. Bachraoui \cite{1} proved
that every interval $[2n,3n]$ contains a prime, while A. Loo \cite{6} proved the same
statement for every interval $[3n,4n].$  Moreover, A. Loo found a lower estimate
for the number of primes in the interval $[3n,4n].$ Note also that already in 1952
 J. Nagura \cite{7} proved that, for $n\geq25,$ there is always a prime between $n$ and
$\frac{6}{5}n.$ From his result it follows that the interval $[5n,6n]$
always contains a prime.
In this paper we prove the following.
\begin{theorem}\label{t1}
The list of integers \;$k$ for which every interval $(kn, (k+1)n),\; n>1,$ contains
 a prime includes\; $k=1,2,3,5,9,14$ and no others, at least for $k\leq 50,000,000.$
 \end{theorem}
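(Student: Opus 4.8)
The plan is to separate Theorem~\ref{t1} into two independent assertions: the \emph{inclusion}, that each of $k=1,2,3,5,9,14$ admits a prime in $(kn,(k+1)n)$ for every $n>1$, and the \emph{exclusion}, that every remaining $k\le 5\times10^{7}$ fails for at least one $n>1$. The elementary observation underlying both parts is that the open interval $(kn,(k+1)n)$ has length $n$ and contains exactly the $n-1$ integers $kn+1,\dots,kn+n-1$; hence it carries a prime iff at least one of these is prime, and it is prime-free iff all of them are composite. In particular the smallest admissible case $n=2$ gives the lone integer $2k+1$, so $k$ already fails at $n=2$ precisely when $2k+1$ is composite.

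For the inclusion I would dispatch $k=1,2,3,5$ using the results recalled in the introduction: Bertrand's postulate in its sharp form (a prime strictly between $n$ and $2n$ for $n>1$) handles $k=1$, El Bachraoui's theorem handles $k=2$, Loo's handles $k=3$, and Nagura's bound on $(x,\frac{6}{5}x)$ for $x\ge 25$ handles $k=5$ after checking $n=2,3,4$ by hand. The genuinely new cases are $k=9$ and $k=14$, with ratios $\frac{10}{9}$ and $\frac{15}{14}$. For these I would argue in the classical Bertrand style using an effective form of the prime number theorem: a Rosser--Schoenfeld or Dusart estimate guaranteeing a prime in $(x,x(1+1/(c\log^{2}x)))$ for $x$ beyond an explicit threshold $x_{0}$. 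Since $1/(c\log^{2}x)$ eventually drops below both $\frac{1}{9}$ and $\frac{1}{14}$, this yields a prime in $(kn,(k+1)n)$ for all $kn\ge x_{0}$, i.e. for all $n\ge N_{k}:=\lceil x_{0}/k\rceil$; the finitely many values $2\le n<N_{k}$ are then verified directly.

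For the exclusion I would organize the search over $n$ rather than over $k$. For each fixed $n\ge 2$ a value $k$ is killed exactly when $kn+1,\dots,kn+n-1$ are all composite; equivalently, running through the primes, a given $n$ leaves $k$ alive only when some prime lands in $(kn,(k+1)n)$. So I would sieve: mark every $k\in[1,5\times10^{7}]$ as alive, and for $n=2,3,4,\dots$ mark as bad each alive $k$ whose block $kn+1,\dots,kn+n-1$ is entirely composite. The point that makes this rigorous and finite is self-certification: a single witness $n$ proves $k$ bad, so if the run up to some bound $N_{\max}$ terminates with exactly the six survivors $1,2,3,5,9,14$, then no other $k\le 5\times10^{7}$ can be good --- any genuinely bad $k$ needing a witness $n>N_{\max}$ would have survived and enlarged the list. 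Combined with the inclusion step certifying those six, this closes the argument.

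The main obstacle is controlling the scale of the exclusion computation and guaranteeing coverage. Heuristically the expected number of primes in $(kn,(k+1)n)$ is about $n/\log(kn)$, so once $n$ much exceeds $\log(kn)\approx 18$ the interval almost surely contains a prime; this forces any bad $k$ to fail already at a comparatively small $n$, which is what allows a modest $N_{\max}$ to suffice and is what must be confirmed experimentally. The cost is that certifying a bad $k$ near $5\times10^{7}$ at some small $n$ requires primality data for integers up to roughly $N_{\max}\cdot 5\times10^{7}$, so the real work is an efficient segmented sieve (or fast primality testing) to that bound, together with bookkeeping recording an explicit witness $n$ for each excluded $k$. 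I expect the delicate points to be (i) choosing $N_{\max}$ large enough that the survivor set collapses to exactly six, and (ii) for $k=9,14$, pinning down an explicit $x_{0}$ from the effective prime-counting bounds and then completing the finite check for all $n<N_{k}$.
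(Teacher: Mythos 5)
Your proposal is correct and follows essentially the same strategy as the paper: a computational exclusion step (the paper computes, for each $k$ from $15$ to $5\times10^{7}$, the least witness $a(k)$, which turns out to lie in $[2,16]$, exactly the self-certifying search you describe) combined with an inclusion step for $k=1,2,3,5,9,14$ proved by an effective analytic estimate plus finite descent. The only real difference is in the inclusion machinery: the paper routes through its generalized Ramanujan numbers $R_{(k+1)/k}(1)$, bounded via Dusart's estimate on $\vartheta(x)$, whereas you invoke the known results for $k\le 5$ and a short-interval prime theorem for $k=9,14$ --- which is precisely the alternative ``method of small intervals'' the paper itself describes in Section 6.
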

 Besides, in this paper, for every $k=1,2,3,5,9,14,$ we give an algorithm for
  finding the smallest $N_k(m),$ such that, for $n\geq N_k(m),$ the interval
 $(kn,(k+1)n)$ contains at least $m$ primes.
\section{Case $k=1$}
Ramanujan \cite{8} not only proved Bertrand's postulate but also indicated the
smallest integers  $\{R(m)\},$ such that, if $x\geq R(m),$ then the
interval $\left(\frac{x}{2}, x\right]$ contains at least $m$ primes, or, the same,
 $\pi(x)-\pi(x/2)\geq m.$ It is easy to see that here it is sufficient to consider
 \emph{integer} $x$ and it is evident that every term of $\{R(m)\}$ is prime.
  The numbers $\{R(m)\}$ are called \emph{Ramanujan primes} \cite{14}.
  It is the sequence (A104272 in \cite{13}):
 \begin{equation}\label{1}
 2, 11, 17, 29, 41, 47, 59, 67, 71, 97,...
 \end{equation}
Since $\pi(x)-\pi(x/2)$ is not a monotonic function,
to calculate the Ramanujan numbers one should have an effective upper estimate
of $R(m).$  In \cite{8} Ramanujan showed
 that
 \begin{equation}\label{2}
\pi(x)-\pi(x/2)>\frac{1}{\ln x}\left(\frac{x}{6}-3\sqrt{x}\right),\; x>300.
 \end{equation}
 In particular, for $x\geq324,$ the left hand side is positive and thus $\geq1.$
 Using direct descent, he found that $\pi(x)-\pi(x/2)\geq1$ already from
 $x\geq2.$ Thus $R(1)=2$ which proves the Bertrand postulate. Further, e.g.,
  for $x\geq400,$ the left hand side of (\ref{2}) is more than 1 and thus $\geq2.$
  Again, using direct descent, he found that $\pi(x)-\pi(x/2)\geq2$ already from
 $x\geq11.$ Thus $R(2)=11,$ etc.
 Sondow \cite{14} found that $R(m)<4m\ln(4m)$ and conjectured that
\begin{equation}\label{3}
R(m)<p_{3m}
 \end{equation}
which was proved by Laishram \cite{5}. Since, for $n\geq2,$ $p_n\leq en\ln n$
 (cf. \cite{3}, Section 4), then (\ref{3}) yields $R(m)\leq3em\ln(3m),\; m\geq1.$
Set $x=2n.$ Then, if $2n\geq R(m),$ then $\pi(2n)-\pi(n)\geq m.$ Thus the interval
$(n,2n)$ contains at least $m$ primes, if
$$n\geq\left\lceil\frac{R(m)+1}{2}\right\rceil=\begin{cases}
2,\;\;if\;\;m=1,\;\\ \frac{R(m)+1}{2},\;\;if\;\;m\geq2.\end{cases}$$

Denote by $N_1(m)$ the smallest number such that, if $n\geq N_1(m),$ then the
interval $(n,2n)$ contains at least $m$ primes. It is clear, that $N_1(1)=R(1)=2.$
If $m\geq2,$ formally the condition $x=2n\geq 2N_1(m)$ is not stronger than the
 condition $x\geq R(m),$ since the latter holds for $x$ even and odd. Therefore, for
 $m\geq2,$ we have $N_1(m)\leq \frac{R(m)+1}{2}.$ Let us show that in fact we
 have here the equality.
 \begin{proposition}\label{p2}
 For $m\geq2,$
 \begin{equation}\label{4}
 N_1(m)=\frac{R(m)+1}{2}.
 \end{equation}
 \end{proposition}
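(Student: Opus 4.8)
The plan is to establish the reverse inequality $N_1(m) \geq \frac{R(m)+1}{2}$, since the bound $N_1(m) \leq \frac{R(m)+1}{2}$ for $m \geq 2$ has already been obtained above. The whole argument rests on two facts recorded in the excerpt: that every Ramanujan prime $R(m)$ is in fact prime, and that it suffices to consider integer $x$ in the definition of $R(m)$. For $m \geq 2$ we have $R(m) \geq 11$, so $R(m)$ is an odd prime; consequently both $\frac{R(m)+1}{2}$ and $\frac{R(m)-1}{2}$ are integers, and they differ by $1$. This is what makes it possible for the value $n$ one step below the upper bound to be meaningful.

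First I would record the translation between the two counting functions. For integer $n \geq 2$ the endpoint $2n$ is even and exceeds $2$, hence composite, so the number of primes in the open interval $(n,2n)$ equals $\pi(2n) - \pi(n)$; that is, setting $x = 2n$ in the definition of $R(m)$ counts exactly the primes of $(n,2n)$. This is the same identity that already produced the upper bound, and I would reuse it in the opposite direction below.

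Next I would exhibit a value of $n$ just below $\frac{R(m)+1}{2}$ at which the interval fails to contain $m$ primes. Set $n_0 = \frac{R(m)-1}{2} = \frac{R(m)+1}{2} - 1$; since $R(m) \geq 11$ we have $n_0 \geq 5 > 1$, so $n_0$ is a legitimate value of $n$. Then $2n_0 = R(m) - 1$ is even and hence composite, so by the identity of the previous paragraph the number of primes in $(n_0, 2n_0)$ equals $\pi(R(m)-1) - \pi\!\left(\frac{R(m)-1}{2}\right)$.

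The crux is to show that this quantity is strictly less than $m$, and here I would invoke the minimality built into the definition of $R(m)$. By definition $R(m)$ is the \emph{least} integer such that $\pi(x) - \pi(x/2) \geq m$ holds for every $x \geq R(m)$. Because $R(m)-1$ is therefore not such a threshold, some $x \geq R(m)-1$ must violate the inequality; but every $x \geq R(m)$ already satisfies it, so the only candidate for failure is $x = R(m)-1$ itself, giving $\pi(R(m)-1) - \pi\!\left(\frac{R(m)-1}{2}\right) < m$. Hence $(n_0, 2n_0)$ contains fewer than $m$ primes, so $N_1(m) > n_0$, i.e. $N_1(m) \geq \frac{R(m)+1}{2}$, and together with the upper bound this yields the claimed equality. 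The one genuinely delicate step is this last inference: one must argue that the breakdown of the $m$-prime property occurs precisely at $x = R(m)-1$ and not merely at some smaller $x$, and this is exactly where the ``smallest threshold'' phrasing in the definition of $R(m)$ — combined with the non-monotonicity of $\pi(x)-\pi(x/2)$ stressed earlier in the text — must be used with care.
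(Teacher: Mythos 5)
Your proof is correct and takes essentially the same route as the paper: both establish the lower bound by using the minimality of $R(m)$ to show that the interval $(n_0,2n_0)$ with $n_0=\frac{R(m)-1}{2}$ contains fewer than $m$ primes, and then note that the next admissible integer value of $n$ is $\frac{R(m)+1}{2}$. Your explicit observation that $2n_0=R(m)-1$ is composite, so the open-interval prime count agrees with $\pi(R(m)-1)-\pi\!\left(\frac{R(m)-1}{2}\right)$, merely spells out a point the paper leaves implicit.
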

\begin{proof} Note that the interval $\left(\frac{R(m)-1}{2}, R(m)-1\right)$
  cannot contain more than $m-1$ primes. Indeed, it is an interval of type
  $\left(\frac{x}{2},x\right)$ for integer $x$ and the following such interval is
  $\left(\frac{R(m)}{2}, R(m)\right).$ By the definition, $R(m)$ is the \emph{smallest} number
  such that if $x\geq R(m),$ then $\{(\frac{x}{2}, x)\}$ contains $\geq m$ primes.
 Therefore, the supposition that already interval $\left(\frac{R(m)-1}{2}, R(m)-1\right)$
   contains $\geq m$ primes contradicts the minimality of $R(m).$
 Since the following interval of type $(y,\;2y)$ with integer $y\geq\frac{R(m)-1}{2}$ is
    $\left(\frac{R(m)+1}{2},\; R(m)+1\right),$ then (\ref{4}) follows.
  \end{proof}

So the sequence $\{N_1(m)\},$ by (\ref{1}), is (A084140
 in \cite{13})
 \begin{equation}\label{5}
 2, 6, 9, 15, 21, 24, 30, 34, 36, 49, ...
 \end{equation}
 \section{Generalized Ramanujan numbers}
Further our research is based on a generalization of Ramanujan's method. With this
 aim, we define generalized Ramanujan numbers (cf. \cite{12}, Section 10, and
  earlier (2009) comment in A164952 \cite{13}).
 \begin{definition}\label{d3}
 Let $v>1$ be a real number. A $v$-Ramanujan number  $(R_v(m)),$ is the smallest
 integer such that if $x\geq R_v(m),$ then $\pi(x)-\pi(x/v)\geq m.$
 \end{definition}
 It is known \cite{10} that all $v$-Ramanujan numbers are primes.
 In particular, $R_2(m)=R(m),\; m=1,2,...,$ are the proper Ramanujan primes.
 \begin{definition}\label{d4}
 For a real number $v>1$ the \emph{$v$-Chebyshev number} $C_v(m)$  is the smallest integer,
  such that if $x\geq C_v(m),$ then $\vartheta(x)-\vartheta(x/v)\geq m\ln x,$
  where $\vartheta(x)=\sum_{p\leq x}\ln p$ is the Chebyshev function.
 \end{definition}
 Since  $\frac{\vartheta(x)-\vartheta(x/v)}{\ln x}$ can enlarge on 1 only when $x$
  is prime, then all $v$-Chebyshev numbers $C_v(m)$ are primes.
\begin{proposition}\label{p5}
  We have
 \begin{equation}\label{6}
 R_v(m)\leq C_v(m).
 \end{equation}
 \end{proposition}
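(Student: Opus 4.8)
The plan is to compare the two difference functions directly and then invoke the minimality built into the definition of $R_v(m)$. The crucial observation is that every prime $p$ contributing to the sum $\vartheta(x)-\vartheta(x/v)=\sum_{x/v<p\le x}\ln p$ lies in the interval $(x/v,x]$, so it satisfies $\ln p\le\ln x$. Summing this bound over all such primes gives the one-line inequality
$$\vartheta(x)-\vartheta(x/v)=\sum_{x/v<p\le x}\ln p\le(\pi(x)-\pi(x/v))\ln x,$$
valid for every $x>1$.

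Next I would fix an arbitrary $x\ge C_v(m)$. Since $C_v(m)$ is a prime (as noted just before the statement), we have $C_v(m)>1$, and hence $\ln x>0$ throughout this range. By the definition of $C_v(m)$ the left-hand side above is at least $m\ln x$, so the displayed inequality yields $(\pi(x)-\pi(x/v))\ln x\ge m\ln x$. Dividing through by $\ln x>0$ gives $\pi(x)-\pi(x/v)\ge m$. Thus $C_v(m)$ is itself an integer with the property that $\pi(x)-\pi(x/v)\ge m$ for every $x\ge C_v(m)$; but $R_v(m)$ is by definition the \emph{smallest} integer enjoying exactly this property, so $R_v(m)\le C_v(m)$, which is precisely (\ref{6}).

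There is essentially no serious obstacle here: the whole argument rests on the elementary bound $\ln p\le\ln x$ for primes $p\in(x/v,x]$, together with the positivity of $\ln x$ that allows the cancellation. The only point that deserves a moment's care is the verification that $\ln x>0$ on the relevant range, which is why I would emphasise from the outset that the values of $x$ under consideration exceed $1$ (guaranteed since $C_v(m)$ is prime).
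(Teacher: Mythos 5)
Your proof is correct and follows essentially the same route as the paper: both arguments rest on the bound $\ln p\le\ln x$ for primes $p\in(x/v,x]$, giving $\vartheta(x)-\vartheta(x/v)\le(\pi(x)-\pi(x/v))\ln x$, and then invoke the minimality in the definition of $R_v(m)$. Your added care about $\ln x>0$ is a harmless refinement of the same argument.
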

 \begin{proof} Let $x\geq C_v(m).$ Then we have
 \begin{equation}\label{7}
 m\leq \frac {\vartheta(x)-\vartheta(x/v)}{\ln x}=\sum_{\frac{x}{v}
 <p\leq x}\frac{\ln p}{\ln x}\leq \sum_{\frac{x}{v}<p\leq x}1=
 \pi(x)-\pi(x/v).
 \end{equation}
 Thus, if $x\geq C_v(m),$ then \emph{always} $\pi(x)-\pi(x/v)\geq m.$
 By the Definition \ref{d3}, this means that  $R_v(m)\leq C_v(m).$
 \end{proof}
 Now we give an upper estimates for $C_v(m)$ and $R_v(m).$
  \begin{proposition}\label{p6}
  Let $x=x_v(m)\geq2$ be any number for which
   \begin{equation}\label{8}
\frac{x}{\ln x}\left(1-\frac{1300}{\ln^4x}\right)\geq \frac {vm}{v-1}.
 \end{equation}
 Then
\begin{equation}\label{9}
  R_v(m)\leq C_v(m)\leq x_v(m).
  \end{equation}
  \end{proposition}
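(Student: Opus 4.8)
The plan is to lean on Proposition \ref{p5}, which already supplies $R_v(m)\le C_v(m)$, so that the entire burden falls on the second inequality $C_v(m)\le x_v(m)$. By Definition \ref{d4}, $C_v(m)$ is the smallest integer $N$ for which $\vartheta(x)-\vartheta(x/v)\ge m\ln x$ holds for every $x\ge N$. Consequently it suffices to prove that, under hypothesis (\ref{8}), one has $\vartheta(x)-\vartheta(x/v)\ge m\ln x$ for all real $x\ge x_v(m)$; the desired bound then follows at once from the minimality built into the definition of $C_v(m)$.

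The central step is a pointwise implication. Multiplying (\ref{8}) through by the positive quantity $\frac{(v-1)\ln x}{v}$ converts its right-hand side into exactly $m\ln x$ and its left-hand side into $\left(x-\frac{x}{v}\right)\left(1-\frac{1300}{\ln^{4}x}\right)$, so it is enough to establish the Chebyshev-type lower bound
\begin{equation*}
\vartheta(x)-\vartheta(x/v)\ge\left(x-\frac{x}{v}\right)\left(1-\frac{1300}{\ln^{4}x}\right).
\end{equation*}
Writing $\vartheta(x)-\vartheta(x/v)=\left(x-\frac{x}{v}\right)+\bigl[\vartheta(x)-x\bigr]-\bigl[\vartheta(x/v)-x/v\bigr]$, this reduces to bounding the error $\bigl[\vartheta(x/v)-x/v\bigr]-\bigl[\vartheta(x)-x\bigr]$ from above by $\left(x-\frac{x}{v}\right)\frac{1300}{\ln^{4}x}$. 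Here I would invoke an explicit (effective) form of the prime number theorem for $\vartheta$, namely a Rosser--Schoenfeld or Dusart estimate of the shape $|\vartheta(t)-t|\le \eta\, t/\ln^{k}t$ valid beyond an explicit threshold, and verify that the constant $1300$ has been chosen large enough to swallow both error contributions.

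To pass from a single $x$ to the whole range $x\ge x_v(m)$, I would use a monotonicity argument: the function $f(x)=\frac{x}{\ln x}\left(1-\frac{1300}{\ln^{4}x}\right)$ appearing on the left of (\ref{8}) is increasing once $x$ is large (both factors are positive and increasing there), so if (\ref{8}) holds at $x=x_v(m)$ it holds at every $x\ge x_v(m)$, and the pointwise implication then delivers the $\vartheta$-inequality throughout. A useful by-product of the hypothesis is that the left-hand side of (\ref{8}) can be positive only when $\ln^{4}x>1300$, i.e.\ roughly $x\gtrsim 400$; this automatically places $x$ in the region where the explicit $\vartheta$-bounds apply, so no separate small-$x$ check is needed for the argument $x$ itself.

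I expect the main obstacle to be uniformity in $v$. The term $\vartheta(x/v)-x/v$ is evaluated at the smaller argument $x/v$, whose logarithm is not $\ln x$, and for large $v$ this argument can slip near or below the validity threshold of the explicit estimate, so a coarser bound may be required there. Fixing the single constant $1300$ so that the combined error is dominated by $\left(x-\frac{x}{v}\right)\frac{1300}{\ln^{4}x}$ \emph{simultaneously} for all $v>1$ — both for $v$ near $1$, where $x-\frac{x}{v}$ is small, and for large $v$, where $x/v$ is small — is the delicate calibration on which the whole estimate depends.
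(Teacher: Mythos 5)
Your strategy is the same as the paper's: use Proposition \ref{p5} to dispose of $R_v(m)\le C_v(m)$, observe that (\ref{8}) is equivalent to $\left(x-\frac{x}{v}\right)\left(1-\frac{1300}{\ln^4x}\right)\ge m\ln x$, and try to deduce $\vartheta(x)-\vartheta(x/v)\ge m\ln x$ from Dusart's bound $|\vartheta(t)-t|\le 1300t/\ln^4t$ for $t\ge2$. Your added remarks --- that one needs the left side of (\ref{8}) to be increasing so the hypothesis propagates to all $x\ge x_v(m)$, and that (\ref{8}) already forces $\ln^4x>1300$ --- are correct and are points the paper leaves implicit.

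The step you defer, however, is not a verification that ``$1300$ is large enough''; it is a step that fails outright as you have set it up. You need
\begin{equation*}
\bigl[\vartheta(x/v)-x/v\bigr]-\bigl[\vartheta(x)-x\bigr]\le\Bigl(x-\frac{x}{v}\Bigr)\frac{1300}{\ln^{4}x}=\frac{1300x}{\ln^{4}x}\Bigl(1-\frac{1}{v}\Bigr),
\end{equation*}
but the triangle inequality applied to Dusart's two-sided bound only gives the upper estimate $\frac{1300\,x/v}{\ln^{4}(x/v)}+\frac{1300\,x}{\ln^{4}x}$, which exceeds the allowance by the strictly positive amount $\frac{1300\,x/v}{\ln^{4}(x/v)}+\frac{1300\,x}{v\ln^{4}x}$ for every $v>1$ and every $x$; no choice of threshold or recalibration of the constant repairs this, because the target is smaller than even the single error term $\frac{1300x}{\ln^{4}x}$ contributed by the endpoint $x$ alone. (For what it is worth, the paper's own proof founders at the identical spot: it writes the combined error as $1300\bigl(\frac{1}{\ln^{4}x}-\frac{1}{v\ln^{4}(x/v)}\bigr)$, with a minus sign where the two-sided estimate produces a plus.) A correct proof along these lines would have to replace (\ref{8}) by the stronger hypothesis $x\bigl(1-\frac{1}{v}\bigr)-1300\bigl(\frac{x}{\ln^{4}x}+\frac{x/v}{\ln^{4}(x/v)}\bigr)\ge m\ln x$, or else use a sharper or one-sided input in place of the absolute-value bound. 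So your instinct that this is ``the delicate calibration on which the whole estimate depends'' is right, but the proposal does not supply it, and the absorption you describe cannot be made to work in the stated form.
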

  \begin{proof}
We use the following inequality of Dusart \cite{3} (see his Theorem 5.2):
$$|\vartheta(x)-x|\leq\frac{1300x}{\ln^4x},\; x\geq2.$$
  Thus we have
 $$\vartheta(x)-\vartheta(x/v)\geq x\left(1-\frac{1}{v}-1300\left(\frac{1}{\ln^4x}-
 \frac{1}{v\ln^4\frac{x}{v}}\right)\right)$$
 $$\geq x\left(1-\frac{1}{v}\right)\left(1-\frac{1300}{\ln^4x}\right). $$
If now
 $$x\left(1-\frac{1}{v}\right)\left(1-\frac{1300}{\ln^4x}\right)\geq m\ln x,\; x\geq x_v(m),$$
 then
 $$\vartheta(x)-\vartheta(x/v)\geq m\ln x,\;x\geq x_v(m)$$
 and, by the Definition \ref{d4}, \;$C_v(m)\leq x_v(m).$  So, according to
 (\ref{6}), we conclude that $R_v(m)\leq x_v(m).$
 \end{proof}
 \begin{remark}\label{r7}
  In fact, in Theorem 5.2 \cite{3} Dusart gives several inequalities of the form
  $$|\vartheta(x)-x|\leq\frac{ax}{\ln^bx},\; x\geq x_0(a,b).$$
  In the proof we used the maximal value $b=4.$
  However, with the computer point of view, the values $a=1300, \;b=4$ from
   Dusart's theorem not always are the best.
   The analysis for $x\geq25$ shows that the condition 
  $$x(1-\frac{1}{v})\left(1-\frac{ax}{\ln^bx}\right)\geq m\ln x$$
is the weakest and thus satisfies for the smallest $x_v(x)=x_v(a,b),$ if to use the following
values of $a$ and $b$ from Dusart's theorem: \newline
  $a=3.965,\;b=2$ for $x$ in range $(25,\;7\cdot10^7];$\newline
  $a=\;1300,\;b=4$ for $x$ in range $(7\cdot10^7,\;10^9];$\newline
  $a=0.001,\;b=1$ for $x$ in range $(10^9,\;8\cdot10^9];$\newline
  $a=\;\;0.78,\;b=3$ for $x$ in range $(8\cdot10^9,\;7\cdot 10^{33}];$\newline
  $a=\;1300,\;b=4$ for $x>7\cdot 10^{33}.$
 \end{remark}
 Proposition \ref{p6} gives the terms of sequences $\{C_v(m)\},\;\{R_v(m)\}$
  for every $v>1, \;m\geq1.$ In particular, if $k=1$ we find  $\{C_2(m)\}:$
  $$11,17,29,41,47,59,67,71,97,101,107,127,149,151,167,179,223,$$
  $$229,233,239,241,263,269,281,307,311,347,349,367,373,401,409,$$
 \begin{equation}\label{10}
 419,431,433,443,...\;.
  \end{equation}
  This sequence requires a separate comment. We observe that up to $C_2(100)=1489$
  only two terms of this sequence $(C_2(17)=223$ and $C_2(36)=443)$ are
  not Ramanujan numbers, and the sequence is missing only the following Ramanujan
  numbers: 181,227,439,491,1283,1301 and no others up to 1489. The latter observation
  shows how much the ratio $\frac{\vartheta(x)}{\ln x} $ exactly approximates
   $\pi(x).$\newline
  \indent Further, for $v=\frac{k+1}{k},$ we find  the following sequences:\newline
  for $k=2,\;\{C_v(m)\},$
  \begin{equation}\label{11}
 13, 37, 41, 67, 73, 97, 127, 137, 173, 179, 181, 211, 229, 239,...\;;
  \end{equation}
  for $k=2,\;\{R_v(m)\},$
 \begin{equation}\label{12}
 2, 13, 37, 41, 67, 73, 97, 127, 137, 173, 179, 181, 211, 229, 239, ...\;;
  \end{equation}
  for $k=3,\;\{C_v(m)\},$
  \begin{equation}\label{13}
  29, 59, 67, 101, 149, 157, 163, 191, 227, 269, 271, 307, 379,...\;;
  \end{equation}
  for $k=3,\;\{R_v(m)\},$
 \begin{equation}\label{14}
  11, 29, 59, 67, 101, 149, 157, 163, 191, 227, 269, 271, 307, 379,...\;;
  \end{equation}
   for $k=5,\;\{C_v(m)\},$
  \begin{equation}\label{15}
 59, 137, 139, 149, 223, 241, 347, 353, 383, 389, 563, 569, 593,...\;;
  \end{equation}
  for $k=5,\;\{R_v(m)\},$
 \begin{equation}\label{16}
   29, 59, 137, 139, 149, 223, 241, 347, 353, 383, 389, 563, 569, 593,...\;;
  \end{equation}
 for $k=9,\;\{C_v(m)\}$,
  \begin{equation}\label{17}
 223, 227, 269, 349, 359, 569, 587, 593, 739, 809, 857, 991, 1009,...\;;
  \end{equation}
  for $k=9,\;\{R_v(m)\},$
 \begin{equation}\label{18}
  127, 223, 227, 269, 349, 359, 569, 587, 593, 739, 809, 857, 991, 1009,...\;;
  \end{equation}
  for $k=14,\;\{C_v(m)\},$
  \begin{equation}\label{19}
  307, 347, 563, 569, 733, 821, 1427, 1429, 1433, 1439, 1447, 1481,...\;;
  \end{equation}
  for $k=14,\;\{R_v(m)\},$
 \begin{equation}\label{20}
  127, 307, 347, 563, 569, 733, 1423, 1427, 1429, 1433, 1439, 1447,...\;.
  \end{equation}
  \section{Estimates of type (\ref{3})}
  \begin{proposition}\label{p8}
  We have
    \begin{equation}\label{21}
 C_2(m-1)\leq p_{3m},\;m\geq2;
  \end{equation}
   \begin{equation}\label{22}
 R_{\frac{3}{2}}(m)\leq p_{4m},\; m\geq1;\; C_{\frac{3}{2}}(m-1)\leq p_{4m},\;
  m\geq2;
  \end{equation}
    \begin{equation}\label{23}
 R_{\frac{4}{3}}(m)\leq p_{6m},\; m\geq1;\;C_{\frac{4}{3}}(m-1)\leq p_{6m},\;
 m\geq2;
  \end{equation}
    \begin{equation}\label{24}
 R_{\frac{6}{5}}(m)\leq p_{11m},\; m\geq1;\;C_{\frac{6}{5}}(m-1)\leq p_{11m},\;
  m\geq2;
  \end{equation}
    \begin{equation}\label{25}
 R_{\frac{10}{9}}(m)\leq p_{31m},\; m\geq1;\;C_{\frac{10}{9}}(m-1)\leq p_{31m},\;
  m\geq2;
  \end{equation}
    \begin{equation}\label{26}
 R_{\frac{15}{14}}(m)\leq p_{32m},\; m\geq1;\;C_{\frac{15}{14}}(m-1)\leq p_{32m},\;
  m\geq2.
  \end{equation}
  \end{proposition}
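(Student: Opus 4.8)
The plan is to reduce every inequality in the Proposition to a single invocation of Proposition~\ref{p6} together with one soft comparison between $R_v$ and $C_v$. Throughout I write $v=\frac{k+1}{k}$, so that $\frac{v}{v-1}=k+1$ and the right-hand side of (\ref{8}) becomes $(k+1)m$. By Proposition~\ref{p6}, to prove a bound of the shape $C_v(m')\le p_{cm}$ it suffices to verify that the single number $x=p_{cm}$ satisfies (\ref{8}) with $m$ replaced by $m'$, that is
$$\frac{p_{cm}}{\ln p_{cm}}\left(1-\frac{1300}{\ln^4 p_{cm}}\right)\ge (k+1)m'.$$
I would take $m'=m-1$ for each of the six Chebyshev bounds; the companion Ramanujan bounds $R_v(m)\le p_{cm}$ would then follow from the comparison $R_v(m)\le C_v(m-1)$.

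The comparison I would establish first, as it is uniform in $v$ and needs no computation. \textbf{Claim:} for every real $v>1$ and every $m\ge 3$, $R_v(m)\le C_v(m-1)$. Fix any $x\ge C_v(m-1)$ and let $p^{*}$ be the largest prime $\le x$, so $\ln p^{*}\le\ln x$. Writing $N=\pi(x)-\pi(x/v)$, the defining property of $C_v(m-1)$ gives
$$\sum_{x/v<p\le x}\ln p=\vartheta(x)-\vartheta(x/v)\ge(m-1)\ln x\ge(m-1)\ln p^{*}.$$
Every prime in $(x/v,x]$ is at most $p^{*}$, and once $N\ge2$ at least one of them is strictly smaller than $p^{*}$, so the same sum is $<N\ln p^{*}$; hence $N\ln p^{*}>(m-1)\ln p^{*}$, i.e.\ $N\ge m$. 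For $m\ge3$ the alternative $N\le1$ is impossible, since it would force $\vartheta(x)-\vartheta(x/v)\le\ln p^{*}<(m-1)\ln x$. Thus $\pi(x)-\pi(x/v)\ge m$ for all $x\ge C_v(m-1)$, which is the claim. The two exponents $m=1,2$ I would read off directly from the tabulated sequences (\ref{12})--(\ref{20}) and compare with $p_{c}$ and $p_{2c}$.

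What remains is to check the displayed instance of (\ref{8}) at $x=p_{cm}$, with $m'=m-1$, for all $m\ge2$. In the asymptotic range I would substitute an explicit lower bound of Dusart type, say $p_{n}\ge n(\ln n+\ln\ln n-1)$, together with a matching estimate of $\ln p_{cm}$; the left-hand side then grows like $cm\,(1-o(1))$ while the right-hand side is $(k+1)(m-1)$, so since each tabulated constant satisfies $c>k+1$ the inequality holds for all large $m$ with an effective threshold. For the moderate range I would follow Remark~\ref{r7} and replace the pair $(1300,4)$ by the sharper Dusart pairs $(a,b)$ appropriate to the size of $p_{cm}$; this is exactly what forces the admissible integer constants up to $3,4,6,11,31,32$. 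The genuinely small $m$ are then disposed of by comparing the computed values in (\ref{10})--(\ref{20}) with the corresponding primes $p_{cm}$.

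The principal obstacle is not any one inequality but the moderate-$m$ verification in the last step. Because $\frac{x}{\ln x}$ approaches the prime-counting index only slowly, the constraint that pins down $c$ is attained at moderate $m$, not in the limit; one therefore cannot escape the range-by-range optimisation of Dusart's constants from Remark~\ref{r7} followed by a finite, but sizeable, table check confirming that $3,4,6,11,31,32$ simultaneously work. By contrast the comparison $R_v(m)\le C_v(m-1)$ is entirely elementary and carries no numerical content.
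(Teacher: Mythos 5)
Your proposal is correct, and its analytic core coincides with the paper's proof: both verify the hypothesis of Proposition~\ref{p6} at $x=p_{tm}$ by bounding $p_n$ from below (Rosser/Dusart) and $\ln p_n$ from above by $\ln n+\ln\ln n+1$, observe that the resulting condition reduces asymptotically to $t>k+1$ (the paper's inequality (\ref{28}), whose left side tends to $1$), use the sharper Dusart pairs of Remark~\ref{r7} in the moderate range, and close the remaining initial segment by computation. One caveat on that last step: the tables (\ref{10})--(\ref{20}) list only a dozen or so terms, while the threshold coming from (\ref{28}) is of the order of several hundred (the paper needs $m_0=350$ already for $k=1$, and the binding ratio $t/(k+1)=4/3$ for $k=2$ pushes it higher), so a genuine computer verification, not a table lookup, is required -- exactly as the paper does. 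Where you genuinely depart from the paper is in how the $R_v$ bounds are extracted: the paper proves $C_v(m)\le p_{tm}$ for $m\ge m_0$ and obtains both halves of each display from $R_v(m)\le C_v(m)$ (Proposition~\ref{p5}) together with monotonicity of $C_v$ and of $p_n$, whereas you first establish the strictly sharper comparison $R_v(m)\le C_v(m-1)$ for $m\ge3$ and then only need Proposition~\ref{p6} at level $m-1$. Your proof of that comparison is sound: for $x\ge C_v(m-1)$ with $m\ge3$ the case $N=\pi(x)-\pi(x/v)\le1$ contradicts $\vartheta(x)-\vartheta(x/v)\ge(m-1)\ln x>\ln x$, and for $N\ge2$ the strict inequality $\sum_{x/v<p\le x}\ln p<N\ln p^{*}$ forces $N\ge m$. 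This lemma buys a marginally weaker numerical condition and, as a bonus, explains the empirical fact visible in (\ref{11})--(\ref{20}) that each tabulated $\{R_v(m)\}$ is a unit shift of the corresponding $\{C_v(m)\}$; the paper's route is slightly more economical in that it needs no new lemma beyond Proposition~\ref{p5}. Either way the heavy lifting -- the range-by-range optimisation and the finite check that $3,4,6,11,31,32$ all work -- is identical.
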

  \begin{proof}
  Firstly let us find some values of $m_0=m_0(k),$ such that, at least, for
   $m\geq m_0$ all formulas (\ref{21})-(\ref{26}) hold.
  According to (\ref{8})-(\ref{9}), it is sufficient to show that, for $m\geq m_0,$
   we can take $p_{tm},$ where $t=3,4,6,11,31,32$ for formulas (\ref{21})-(\ref{26})
   respectively, in the capacity of $x_v(m).$ As we noted in Remark \ref{r7},
    in order to get possibly smaller values of $m_0,$ we use, instead of (\ref{8}),
    the estimate
    \begin{equation}\label{27}
\frac{x}{\ln x}\left(1-\frac{3.965}{\ln^2x}\right)\geq \frac {vm}{v-1}.
 \end{equation}
 In order to get $x=p_{mt}$ satisfying this inequality, note that \cite{11}
 $$ p_n\geq n\ln n.$$
 Therefore, it is sufficient to consider $p_{mt}$ satisfying the inequality
  $$\ln p_{tm}\leq\left(1-\frac{1}{v}\right)t\ln (tm)\left(1-\frac{3.965}{\ln^2(tm\ln (tm))}\right). $$
   On the other hand, for $n\geq2,$ (see (4.2) in \cite{3})
  $$\ln p_n\leq\ln n+\ln\ln n+1. $$
  Thus it is sufficient to choose $m$ so large that the following inequality holds
  $$\ln (tm)+\ln\ln (tm)+1\leq\left(1-\frac{1}{v}\right)t\ln (tm)\left(1-\frac{3.965}
  {\ln^2(tm\ln (tm))}\right),$$
  or, since $1-\frac{1}{v}=\frac{1}{k+1},$ that
  \begin{equation}\label{28}
   \frac {\ln (tm)+\ln\ln (tm)+1}{\ln (tm)(1-\frac{3.965}{\ln^2(tm\ln (tm))})}\leq
  \frac{t}{k+1}.
  \end{equation}
  Let, e.g., $k=1,\; t=3.$ We can choose $m_0=350.$ Then the left hand side of (\ref{28})
  equals $1.4976...<1.5\;.$ This means that that at least, for $m\geq350,$ the
  estimate (\ref{3}) and, for $m\geq351,$ the estimate (\ref{21}) are valid.
  Using a computer verification for
  $m\leq350,$ we obtain both of these estimates. Note that another short proof
  of (\ref{3}) was obtained in \cite{12} (see there Remark 32).\newline
  Other estimates of the proposition are proved in the same way.
  \end{proof}
  \section{Estimates and formulas for $N_k(m)$}
  \begin{proposition}\label{p9}
  \begin{equation}\label{29}
  N_k(1)=2,\; k=2,3,5,9,14.
  \end{equation}
  For $m\geq2,$
  \begin{equation}\label{30}
  N_k(m)\leq\left\lceil\frac{R_{\frac{k+1}{k}}(m)}{k+1}\right\rceil;
  \end{equation}
  besides, if $R_{\frac{k+1}{k}}(m)\equiv1\pmod{k+1},$ then
  \begin{equation}\label{31}
  N_k(m)=\left\lceil\frac{R_{\frac{k+1}{k}}(m)}{k+1}\right\rceil=
  \frac{R_{\frac{k+1}{k}}(m)+k}{k+1}
  \end{equation}
  and, if $R_{\frac{k+1}{k}}(m)\equiv2\pmod{k+1},$ then
   \begin{equation}\label{32}
  N_k(m)=\left\lceil\frac{R_{\frac{k+1}{k}}(m)}{k+1}\right\rceil=
  \frac{R_{\frac{k+1}{k}}(m)+k-1}{k+1}.
  \end{equation}
  \end{proposition}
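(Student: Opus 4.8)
The plan is to exploit the substitution $v=\frac{k+1}{k}$, under which the open interval $(kn,(k+1)n)$ becomes an interval of the form $\left(\frac{x}{v},x\right)$ with $x=(k+1)n$, so that the generalized Ramanujan number $R:=R_{\frac{k+1}{k}}(m)$ of Definition \ref{d3} controls exactly these intervals. Since for $n\geq2$ both endpoints $kn$ and $(k+1)n$ are composite, the number of primes in the \emph{open} interval $(kn,(k+1)n)$ equals $\pi((k+1)n)-\pi(kn)$; this identification, valid only for $n\geq2$, is precisely what forces the case $m=1$ to be treated separately. For $m=1$ I would argue directly: the interval for $n=1$ is $(k,k+1)$, which contains no integer and hence no prime, so $N_k(1)>1$; on the other hand, membership of $k\in\{2,3,5,9,14\}$ in the list of Theorem \ref{t1} gives a prime in $(kn,(k+1)n)$ for every $n>1$, whence $N_k(1)\leq2$. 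This yields (\ref{29}).

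For the upper bound (\ref{30}) with $m\geq2$ I would note that if $n\geq\left\lceil\frac{R}{k+1}\right\rceil$ then $(k+1)n\geq R$, so $\pi((k+1)n)-\pi(kn)\geq m$ by the defining property of $R$ in Definition \ref{d3}; as $n\geq2$ throughout this range, the open interval then contains at least $m$ primes, giving $N_k(m)\leq\left\lceil\frac{R}{k+1}\right\rceil$.

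The heart of the argument is the matching lower bound, for which I would imitate the minimality step of Proposition \ref{p2}. By minimality of $R$, the value $x=R-1$ fails, i.e. $\pi(R-1)-\pi\!\left(\frac{R-1}{v}\right)\leq m-1$. Throughout I may use that for $m\geq2$ one has $R\geq R_v(2)>2(k+1)$, so the relevant $n_0$ below satisfies $n_0\geq2$ and the endpoints $kn_0,(k+1)n_0$ are composite. In the case $R\equiv1\pmod{k+1}$ the number $R-1$ is a multiple of $k+1$; setting $n_0=\frac{R-1}{k+1}$, which equals $\left\lceil\frac{R}{k+1}\right\rceil-1$ and gives the right-hand side of (\ref{31}), the interval $(kn_0,(k+1)n_0)$ is \emph{exactly} $\left(\frac{R-1}{v},R-1\right)$, hence contains at most $m-1$ primes. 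Therefore $n_0$ is not an admissible threshold, $N_k(m)\geq n_0+1=\left\lceil\frac{R}{k+1}\right\rceil$, and with the upper bound this proves (\ref{31}). In the case $R\equiv2\pmod{k+1}$ the largest multiple of $k+1$ below $R$ is $R-2$, and I would take $n_0=\frac{R-2}{k+1}=\left\lceil\frac{R}{k+1}\right\rceil-1$, matching (\ref{32}).

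The hard part will be exactly this last case: now the interval for $n_0$ has right endpoint $R-2$ rather than $R-1$, so its prime count is not handed to us directly by minimality and must be compared with the known count at $x=R-1$. I would control the difference by two observations. First, since $R$ is an odd prime (for $m\geq2$ we have $R>2$), $R-1$ is composite, so $\pi(R-1)=\pi(R-2)$. Second, $\frac{R-2}{v}=kn_0$ is an integer while $\frac{R-1}{v}=kn_0+\frac{k}{k+1}$, so the half-open interval $\left(\frac{R-2}{v},\frac{R-1}{v}\right]$ has length $\frac{k}{k+1}<1$ and begins at an integer, hence contains no integer and a fortiori no prime; thus $\pi\!\left(\frac{R-1}{v}\right)=\pi\!\left(\frac{R-2}{v}\right)$. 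Combining the two gives $\pi(R-2)-\pi\!\left(\frac{R-2}{v}\right)=\pi(R-1)-\pi\!\left(\frac{R-1}{v}\right)\leq m-1$, so the interval $(kn_0,(k+1)n_0)$ again contains at most $m-1$ primes and $N_k(m)\geq n_0+1=\left\lceil\frac{R}{k+1}\right\rceil$, proving (\ref{32}). The same device explains why residues $R\bmod(k+1)\geq3$ are excluded from the clean formulas: the analogous gap interval then has length $\geq1$ and may capture a prime, so no equality of the above type can be guaranteed.
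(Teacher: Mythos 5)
Your proposal is correct and follows essentially the same route as the paper: the upper bound comes from $(k+1)n\geq R_{\frac{k+1}{k}}(m)$, and the matching lower bound comes from the minimality of $R$ at $x=R-1$ applied to the last interval $(ky,(k+1)y)$ with integer $y$, with the residue-$2$ case reduced to the residue-$1$ case by noting that the two intervals carry the same primes (you do this via $\pi(R-1)=\pi(R-2)$ and the empty gap $\left(\frac{R-2}{v},\frac{R-1}{v}\right]$, the paper by comparing the integer sets directly and observing that the extra integer $R-2$ is a multiple of $k+1$). Your treatment of (\ref{29}) via Theorem \ref{t1} and the $n=1$ failure is a harmless variant of the paper's direct calculation below $N_k(2)$, and you are somewhat more careful than the paper about the open-versus-half-open interval issue.
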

  \begin{proof}
  If $m\geq2,$ formally the condition $x=(k+1)n\geq (k+1)N_k(m)$ is not stronger than the
 condition $x\geq R_{\frac{k+1}{k}}(m),$ since the first one is valid only for $x$
 multiple of $k+1.$ Therefore, for
 $m\geq2,$ (\ref{30}) holds. It allows to calculate the terms of sequence
 $\{N_k(m)\}$ for every $k>1, \;m\geq2.$ Since $N_k(1)\leq N_k(2),$ then,
  having $N_k(2),$ we also can prove (\ref{29}), using direct calculations.
   Now let $R_{\frac{k+1}{k}}(m)\equiv1\pmod{k+1}.$
  Note that, for $y=(R_{\frac{k+1}{k}}(m)-1)/(k+1)$ the interval
  \begin{equation}\label{33}
   (ky,\;(k+1)y)=\left(\frac{k}{k+1}\left(R_{\frac{k+1}{k}}(m)-1\right),\;
    R_{\frac{k+1}{k}}(m)-1\right)
    \end{equation}
  cannot contain more than $m-1$ primes. Indeed, it is an interval of type
  $\left(\frac{k}{k+1}x,x\right)$ for integer $x$ and the following such interval is
  $$\left(\frac{k}{k+1}\left(R_{\frac{k+1}{k}}(m)\right), R_{\frac{k+1}{k}}(m)\right).$$
  By the definition, $R_{\frac{k+1}{k}}(m)$ is the \emph{smallest} number
  such that if $x\geq R_{\frac{k+1}{k}}(m),$ then $\{(\frac{k}{k+1}x, x)\}$
  contains $\geq m$ primes.
   Therefore, the supposition that already interval (\ref{33})
   contains $\geq m$ primes contradicts the minimality of $R_{\frac{k+1}{k}}(m).$
    Since the following
    interval of type $(ky,\;(k+1)y)$ with integer
    $y\geq \frac{k}{k+1}(R_{\frac{k+1}{k}}(m)-1)$ is
    $$\left(\frac{k}{k+1}(R_{\frac{k+1}{k}}(m)+k),\;
    R_{\frac{k+1}{k}}(m)+k\right),$$
     then (\ref{31}) follows.

   Finally, let $R_{\frac{k+1}{k}}(m)\equiv2\pmod{k+1}.$
  Again show that, for $y=(R_{\frac{k+1}{k}}(m)-2)/(k+1)$ the interval
  \begin{equation}\label{34}
   (ky,\;(k+1)y)=\left(\frac{k}{k+1}(R_{\frac{k+1}{k}}(m)-2),\;
    R_{\frac{k+1}{k}}(m)-2\right)
 \end{equation}
 cannot contain more than $m-1$ primes. Indeed, comparing interval (\ref{34})
 with interval (\ref{33}), we see that they contain the same
 integers except for $R_{\frac{k+1}{k}}(m)-2$ which is multiple of $k+1.$ Therefore,
  they contain the same number of primes and this number does not exceed $m-1.$
  Again, since the following interval of type $(ky,\;(k+1)y)$ with integer
    $y\geq \frac{k}{k+1}(R_{\frac{k+1}{k}}(m)-2)$ is
    $$\left(\frac{k}{k+1}(R_{\frac{k+1}{k}}(m)+k-1),\;
    R_{\frac{k+1}{k}}(m)+k-1\right),$$
 then (\ref{32}) follows.
 \end{proof}
 \begin{remark}\label{r10} Obviously formulas (\ref{30})-(\ref{32}) are valid for
 not only for the considered values of $k,$ but for arbitrary $k\geq1.$
 \end{remark}
As a corollary from (\ref{29}), (\ref{31})-(\ref{32}), we obtain the following formula
 in case $k=2.$
  \begin{proposition}\label{p11}
  \begin{equation}\label{35}
  N_2(m)=\begin{cases}
2,\;\;if\;\;m=1,\;\\\left\lceil\frac{R_{\frac{3}{2}}(m)}{3}\right\rceil,
\;\;if\;\;m\geq2.\end{cases}
\end{equation}
  \end{proposition}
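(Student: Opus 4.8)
The plan is to derive this as an immediate corollary of Proposition \ref{p9}, exploiting the special arithmetic of the modulus $k+1=3$. The case $m=1$ requires nothing new: it is exactly the value $N_2(1)=2$ already recorded in (\ref{29}). So the entire content lies in the range $m\geq2$, where the task is to upgrade the general inequality (\ref{30}) to an equality.

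For $m\geq2$ I would proceed as follows. By the remark following Definition \ref{d3}, every $v$-Ramanujan number is prime; in particular $R_{\frac{3}{2}}(m)$ is prime. Since $R_{\frac{3}{2}}(m)$ is nondecreasing in $m$ by Definition \ref{d3}, and $R_{\frac{3}{2}}(2)=13>3$ from the tabulated sequence (\ref{12}), we have $R_{\frac{3}{2}}(m)>3$ for all $m\geq2$, so this prime is not divisible by $3$. Hence $R_{\frac{3}{2}}(m)\equiv1$ or $R_{\frac{3}{2}}(m)\equiv2\pmod{3}$, these being the only residues a prime exceeding $3$ can occupy modulo $3$.

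It then remains only to split into these two residue classes and invoke Proposition \ref{p9} with $k=2$, so that $k+1=3$. If $R_{\frac{3}{2}}(m)\equiv1\pmod3$, formula (\ref{31}) gives $N_2(m)=\lceil R_{\frac{3}{2}}(m)/3\rceil$; if $R_{\frac{3}{2}}(m)\equiv2\pmod3$, formula (\ref{32}) yields the same value. In either case $N_2(m)=\lceil R_{\frac{3}{2}}(m)/3\rceil$, which is precisely the assertion for $m\geq2$, completing the proof.

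The argument is short because $k=2$ is exactly the fortunate modulus: a prime larger than $3$ is forced into one of precisely the two residue classes, $1$ and $2$, that Proposition \ref{p9} already resolves. Thus I expect no substantive obstacle; the only point demanding a word of care is excluding the residue $0\pmod3$, that is, verifying $R_{\frac{3}{2}}(m)\neq3$ for $m\geq2$, which is handled by monotonicity together with the value $R_{\frac{3}{2}}(2)=13$. For larger $k$ the same strategy would leave residue classes modulo $k+1$ uncovered by (\ref{31})--(\ref{32}), so no comparably clean closed form should be expected, which is why this proposition is singled out for $k=2$.
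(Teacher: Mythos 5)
Your proposal is correct and matches the paper's reasoning: the paper states Proposition \ref{p11} as an immediate corollary of (\ref{29}) and (\ref{31})--(\ref{32}), and the implicit argument is exactly yours, namely that $R_{\frac{3}{2}}(m)$ is a prime exceeding $3$ for $m\geq2$, hence lies in residue class $1$ or $2$ modulo $3$, and both cases of Proposition \ref{p9} collapse to the single ceiling expression.
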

  Formula (\ref{35}) shows that the case $k=2$ over its regularity not concedes
  to a classic case $k=1.$
 Note that, if $k\geq3$ and $R_{\frac{k+1}{k}}(m)\equiv j\pmod{k+1},\; 3\leq j\leq k,$
   then, generally speaking, (\ref{30}) is not an equality. Evidently,
   $N_k(m)\geq N_k(m-1)$ and it is interesting that the equality is
   attainable (see below sequences (\ref{37})-(\ref{40})).
\begin{example}\label{e12} Let $k=3,\; m=2.$ Then $v=\frac{4}{3}$ and, by
  (\ref{14}), $R_{\frac{4}{3}}(2)=29\equiv1\pmod4.$ Therefore, by (\ref{31}),
   $N_3(2)=\frac{29+3}{4}=8.$ Indeed, interval $(3\cdot7,\; 4\cdot7)$ already
    contains only prime 23.
  \end{example}
  \begin{example}\label{e13} Let $k=3,\; m=3.$ Then, by
  (\ref{14}), $R_{\frac{4}{3}}(3)=59\equiv3\pmod4.$ Here $N_3(3)=11$ which
  is essentially less than $\left\lceil {R_{\frac{4}{3}}(3)/4}\right\rceil=15.$ Indeed,
   each interval
   $$(3\cdot15,\; 4\cdot15),\;(3\cdot14,\; 4\cdot14),\;(3\cdot13,\; 4\cdot13),\;(3\cdot12,\; 4\cdot12),\; (3\cdot11,\; 4\cdot11)$$
   contains more than 2 primes and only interval $(3\cdot10,\; 4\cdot10)$ contains
   only 2 primes.
  \end{example}
  In any case, Proposition \ref{p9} allows to calculate terms of sequence
  $\{N_k(m)\}$ for every considered values of $k.$ So, we obtain the following
  few terms of $\{N_k(m)\}:$\newline
  for $k=2,$
  \begin{equation}\label{36}
 2,5,13,14,23,25,33,43,46,58,60,61,71,77,80,88,103,104,...\;;
 \end{equation}
 for $k=3,$
 \begin{equation}\label{37}
 2,8,11,17,26,38,40,41,48,57,68,68,70,87,96,100,108,109,...\;;
 \end{equation}
  for $k=5,$
 \begin{equation}\label{38}
 2,7,17,24,25,38,41,58,59,64,65,73,95,97,103,106,107,108,...\;;
 \end{equation}
  for $k=9,$
\begin{equation}\label{39}
 2,14,23,23,34,36,57,58,60,60,77,86,100,100,102,123,149,...\;;
 \end{equation}
for $k=14,$
 \begin{equation}\label{40}
 2,11,24,37,38,39,50,96,96,96,96,97,97,125,125,132,178,178,...\;.
 \end{equation}
\begin{remark}\label{r14}
If, as in \cite{1}, \cite{6}, instead of intervals $(kn,\;(k+1)n),$ to
consider intervals $[kn,\;(k+1)n],$  then sequences (\ref{5}), (\ref{36})-(\ref{38})
would begin with 1.
\end{remark}
 \section{Method of small intervals}
 If we know a theorem of the type: for $x\geq x_0(\Delta),$ the interval
 $(x,\; (1+\frac{1}{\Delta})x]$ contains a prime, then we can calculate
  a bounded number of the first terms of sequences (\ref{5}) and (\ref{36})-(\ref{40}).
Indeed, put $x_1=kn,$ such that $n\geq\frac{x_0}{k}.$ Then $(k+1)n=\frac{k+1}{k}x_1$
and, if $1+\frac{1}{\Delta}<\frac{k+1}{k},$ i.e., $\Delta>k,$ then
$$\left(x_1, \;(1+\frac{1}{\Delta})x_1\right]\subset(kn, (k+1)n). $$

Thus, if $n\geq\frac{x_0}{k},$ then the interval $(kn, (k+1)n)$ contains a prime,
and, using method of finite descent, we can find $N_k(1).$ Further, put $x_2=(1+\frac{1}{\Delta})x_1.$ Then interval $(x_2,\; (1+\frac{1}{\Delta})x_2]$
also contains a prime. Thus the union
$$\left(x_1,\;(1+\frac{1}{\Delta})x_1\right]\cup\left(x_2,\;(1+\frac{1}{\Delta})x_2\right]=\left(x_1,(1+\frac{1}{\Delta})^2x_1\right]$$
contains at least two primes. This means that if
$(1+\frac{1}{\Delta})^2x_1<(k+1)n$ or $(1+\frac{1}{\Delta})^2<1+\frac{1}{k},$
then
$$\left(x_1, \;(1+\frac{1}{\Delta})^2x_1\right]\subset(kn, (k+1)n)$$
 and the interval $(kn, (k+1)n)$ contains at least two primes; again, using method of finite descent, we can find $N_k(2),$ etc., if $(1+\frac{1}{\Delta})^m<1+\frac{1}{k},$
 then
 $$\left(x_1, \;(1+\frac{1}{\Delta})^mx_1\right]\subset(kn, (k+1)n)$$
 and the interval $(kn, (k+1)n)$ contains at least $m$ primes and we can find
  $N_k(m).$ In this way, we can find $N_k(m)$ for $m<\frac{\ln(1+\frac{1}{k})}
  {\ln (1+\frac{1}{\Delta})}.$ In 2002, Ramar\'{e} and Saouter \cite{9} proved that
  interval $(x(1-28314000^{-1}),\;x)$ always contains a prime if $x>10726905041,$
  or, equivalently, interval $(x,\; (1+28313999^{-1})x)$ contains a prime if
  $x>10726905419.$ This means that, e.g., we can find $N_{14}(m)$ for
   $m \le 1954471.$ Unfortunately, this method cannot give the
   exact estimates and formulas for $N_k(m)$ as (\ref{30})-(\ref{32}).\newline
   \indent We can also to consider a more general application of of this method.
   Consider a fixed infinite set $P$ of primes which we call $P$-primes.
    Furthermore, consider the following generalization of $v$-Ramanujan numbers.
 \begin{definition}\label{d15}
  For $v>1,$ a $(v,P)$-Ramanujan number  $(R^{(P)}_v(m)),$ is the smallest
 integer such that if $x\geq R^P_v(m),$ then $\pi_P(x)-\pi_P(x/v)\geq m,$ where
 $\pi_P(x)$ is the number of $P$-primes not exceeding $x.$
 \end{definition}
 Note that every $(v,P)$-Ramanujan number is $P$-prime.
 If we know a theorem of the type: for $x\geq x_0(\Delta),$ the interval
 $\left(x,\; (1+\frac{1}{\Delta})x\right]$ contains a $P$-prime, then, using the above
 described algorithm we can calculate a bounded number of the first $(v,P)$
 -Ramanujan numbers. For example, let $P$ be the set of primes $p\equiv1\pmod{3}.$ From the
  result of Cullinan and Hajir \cite{2} it follows, in particular, that for
  $x\geq106706,$ the interval $(x,\; 1.048x)$ contains a $P$-prime. Using the
  considered algorithm, we can calculate the first 14 $(2,P)$-Ramanujan numbers.
  They are
  \begin{equation}\label{41}
 7,31,43,67,97,103,151,163,181,223,229,271,331,337.
 \end{equation}
 Analogously, if $P$ is the set of primes $p\equiv2\pmod{3},$
 then the sequence of $(2,P)$-Ramanujan numbers begins
 \begin{equation}\label{42}
 11, 23, 47, 59, 83, 107, 131, 167, 227, 233, 239, 251, 263, 281,...\;;
 \end{equation}
 if $P$ is the set of primes $p\equiv1\pmod{4},$
 then the sequence of $(2,P)$-Ramanujan numbers begins
 \begin{equation}\label{43}
 13, 37, 41, 89, 97, 109, 149, 229, 233, 241, 257, 277, 281, 317,...\;;
 \end{equation}
 and, if $P$ is the set of primes $p\equiv3\pmod{4},$
 then the sequence of $(2,P)$-Ramanujan numbers begins
 \begin{equation}\label{44}
 7, 23, 47, 67, 71, 103, 127, 167, 179, 191, 223, 227, 263, 307,...\;.
 \end{equation}
 Denote by $N^{(P)}_k(m)$ the smallest number such that, for $n\geq N^{(P)}_k(m),$
 the interval $(kn,(k+1)n)$ contains at least $m$ $P$-primes. It is easy to see that
 formulas (\ref{30})-(\ref{32}) hold for $N^{(P)}_k(m)$ and
  $R^{(P)}_{\frac{k+1}{k}}(m).$ In particular, in cases $k=1,2$ we have the formulas
   \begin{equation}\label{45}
 N^{(P)}_1(m)=\frac{R^{(P)}_2(m)+1}{2},\;\;N^{(P)}_2(m)=
 \left\lceil\frac{R^{(P)}_{\frac{3}{2}}(m)}{3}\right\rceil.
 \end{equation}
  Therefore, the following sequences for $N^{(P)}_1(m)$ for the considered cases of
  set $P$ correspond to sequences (\ref{41})-(\ref{44}) respectively:
  \begin{equation}\label{46}
 4,16,22,34,49,52,76,82,91,112,115,136,166, 169,...\;;
 \end{equation}
 \begin{equation}\label{47}
 6,12,24,30,42,54,66,84,114,117,120,126,132,141,...\;;
 \end{equation}
 \begin{equation}\label{48}
 7,19,21,45,49,55,75,115,117,121,129,139,141,159,...\;;
 \end{equation}
 \begin{equation}\label{49}
 4,12,24,34,36,52,64,84,90,96,112,114,132,154,...\;.
 \end{equation}
\section{Proof of Theorem \ref{t1}}
For $k\geq1,$ denote by $a(k)$ the least integer $n>1$ for which the interval $(kn,\;(k+1)n)$
contains no prime; in the case, when such $n$ does not exist, we put $a(k)=0.$ Taking
 into account (\ref{21}), note that $a(k)=0$ for $k=1,2,3,5,9,14,...\;.$
 Consider sequence $\{a(k)\}.$ Its first few terms are (A218831 in \cite{13})
 \begin{equation}\label{50}
 0,0,0,2,0,4,2,3,0,2,3,2,2,0,6,2,2,3,2,6,3,2,4,2,2,7,2,2,4,3,...\;.
 \end{equation}
 Calculations of $a(k)$ in the range $\{15,..., 5\times10^7\}$ lead to values of
  $a(k)$ in the interval $[2,\;16]$ which completes the proof.\;\;\;\;\;\;$\square$
\newline
\section{Acknowledgment}
The authors are grateful to  N. J. A. Sloane for several important remarks.

\end{document}